\DeclareMathOperator{\vol}{vol}
\DeclareMathOperator{\lin}{lin}
\DeclareMathOperator{\interior}{int}
\DeclareMathOperator{\GL}{GL}
\newtheorem{thm}{Theorem}[section]
\newtheorem{lemma}[thm]{Lemma}
\newtheorem{conj}[thm]{Conjecture}
\newtheorem{prob}[thm]{Problem}
\theoremstyle{definition}
\theoremstyle{definition}
\newcommand{\bears}{\begin{eqnarray*}}
\newcommand{\eears}{\end{eqnarray*}}
\newcommand{\ZZ}{\ensuremath{\mathbb{Z}}}
\newcommand{\RR}{\ensuremath{\mathbb{R}}}
\newcommand{\La}{\ensuremath{\Lambda}}
\newcommand{\la}{\ensuremath{\lambda}}
\newcommand{\DK}{\ensuremath{\mathfrak{D}K}}
\newcommand{\hDK}{\ensuremath{\tfrac{1}{2}\mathfrak{D}K}}
\newcommand{\sm}{\ensuremath{\setminus}}
\newcommand{\ssq}{\ensuremath{\subseteq}}
\newcommand{\vn}{\ensuremath{\varnothing}}
\newcommand{\bsm}{\ensuremath{\boldsymbol}}
\numberwithin{equation}{section}
\title{Lattice-point enumerators of ellipsoids}
\author{Romanos-Diogenes Malikiosis}
\date{}
\begin{document}
\maketitle

\begin{abstract}
Minkowski's second theorem on successive minima asserts that the volume of a $0$-symmetric convex body $K$ over the covolume of
a lattice $\La$ can be bounded above by a quantity involving all the successive minima of $K$ with respect to $\La$. We will
prove here that the number of lattice points inside $K$ can also accept an upper bound of roughly the same size, in the special
case where $K$ is an ellipsoid. Whether this is also true for all $K$ unconditionally is an open problem, but there is
reasonable hope that the inductive approach used for ellipsoids could be extended to all cases.
% A conjectural inequality bounding above the number of lattice points inside a convex body is verified for the family of
%ellipsoids.
\end{abstract}

\section{Introduction}

In 1993, Betke, Henk, and Wills \cite{BHW} conjectured that the lattice-point enumerator of $\bsm{0}$-symmetric convex bodies accepts a similar
upper bound as its volume, which is described by Minkowski's second theorem on successive minima, namely 
\begin{equation}\label{BHW}
 \lvert K\cap\La\rvert\leqslant \prod_{i=1}^{d}\left[\frac{2}{\la_i(K,\La)}+1\right]
\end{equation}
where $K\ssq\RR^d$ is a convex body, symmetric about the origin, $\La$ a lattice, $\lvert T\rvert$ the cardinality of a set $T$, and $\la_i(K,\La)$ the 
$i$th successive minimum of $K$ with respect to $\La$; it is defined as follows
\[\la_i(K,\La)=\inf\{\la>0|(\la K)\cap\La\text{ contains }i\text{ linearly independent points}\}.\]
Here, a convex body is a compact convex subset of $\RR^d$; we don't need any additional assumptions, such as 
$\bsm{0}\in\interior(K)$. We also denote by $q_i(K,\La)$ or just $q_i$ for short, the quantities 
\[\left[\frac{2}{\la_i(K,\La)}+1\right]\]
 that appear on the right-hand side of \eqref{BHW}. We remind Minkowski's second theorem on successive minima:
\[\frac{\vol(K)}{\det(\La)}\leqslant\prod_{i=1}^{d}\frac{2}{\la_i(K,\La)}\]
where $\det(\La)$ is the determinant (or the covolume) of the lattice $\La$, which is equal to the volume of any fundamental
parallelepiped of $\La$. Inequality \eqref{BHW} implies Minkowski's second theorem on successive minima \cite{BHW}. Even though the 
lattice-point enumerator approximates the volume of the convex body over the covolume of the lattice, such an upper bound is 
not easy to derive. The main obstruction is the lack of homogeneity for the lattice-point enumerator, a property enjoyed by 
volumes.

Betke, Henk, and Wills  proved such an inequality extending Minkowski's first theorem on successive minima \cite{BHW},
in particular, $\lvert K\cap\La\rvert\leqslant q_1^d$; as for inequality \eqref{BHW}, they proved it to be true in two 
dimensions (the one-dimensional being trivial). Several attempts for weaker inequalities followed; Henk proved that inequality
\eqref{BHW} is true up to multiplying the right-hand side by $2^{d-1}$ \cite{H}. The author then managed to decrease this exponential factor
to roughly $1.64^d$ \cite{M10}, which yields the best inequality known unconditionally.

Regarding families of convex bodies, Bey, Henk, Henze, and Linke, confirmed the conjecture for lattice parallelepipeds and lattice-face
polytopes \cite{BHHL}. In this note, we will confirm the conjecture for all ellipsoids, without requiring to be centered at the 
origin:
\begin{thm}\label{mainthm}
 Let $E\ssq\RR^d$ be an ellipsoid and $\La$ a lattice. Then
\[\lvert E\cap\La\rvert\leqslant\prod_{i=1}^{d}\left[\frac{1}{\la_i(\mathfrak{D}E,\La)}+1\right].\]
\end{thm}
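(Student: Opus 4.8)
The plan is to reduce to the Euclidean ball and then induct on the dimension by slicing. First I would exploit that an ellipsoid is a linear image of a ball: writing $c$ for the center of $E$ and $E_0=E-c$ for its $0$-symmetric translate, choose $T\in\GL_d(\RR)$ with $T(E_0)=B^d$, the unit ball (e.g.\ $T=A^{1/2}$ if $E_0=\{x:x^{\top}Ax\le 1\}$). Since lattice-point counts are invariant under $T$ and $\mathfrak{D}E=2E_0$, applying $T$ replaces $E$ by a translate $c'+B^d$ of the unit ball and $\La$ by $T\La$, whose successive minima with respect to $B^d$ I denote $\mu_1\le\dots\le\mu_d$. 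Because $\la_i(\mathfrak{D}E,\La)=\la_i(2B^d,T\La)=\mu_i/2$, the theorem becomes the claim that $\lvert(c+B^d)\cap\La\rvert\le\prod_{i=1}^d(2/\mu_i+1)$ for every lattice (renaming $T\La$ back to $\La$) and every center. To make the induction close I would prove the more flexible statement with a variable radius $\rho>0$, namely $\lvert(c+\rho B^d)\cap\La\rvert\le\prod_{i=1}^d(2\rho/\mu_i+1)$, since the slices appearing below are balls of smaller radius.

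The base case $d=1$ is immediate, an interval of length $2\rho$ meeting the rank-one lattice $\mu_1\ZZ$ in at most $\lfloor 2\rho/\mu_1\rfloor+1\le 2\rho/\mu_1+1$ points. For the inductive step I would fix linearly independent $w_1,\dots,w_d\in\La$ with $|w_i|=\mu_i$, set $L=\lin(w_1,\dots,w_{d-1})$ and $\La_L=\La\cap L$. Since $w_1,\dots,w_{d-1}\in\La_L$ while $\La_L\ssq\La$, one checks $\mu_i(\La_L)=\mu_i$ for $i\le d-1$. Projecting orthogonally onto the line $L^{\perp}$ sends $\La$ onto a rank-one lattice of spacing $h=\det\La/\det\La_L$, and the points of $(c+\rho B^d)\cap\La$ distribute among parallel slices, each a coset of $\La_L$ lying in a hyperplane at signed height $s$ ranging over an arithmetic progression of gap $h$ inside $[-\rho,\rho]$ (after centering at the projection of $c$). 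Each slice is a $(d-1)$-dimensional ball of radius $\sqrt{\rho^2-s^2}$ meeting a coset of $\La_L$, so the induction hypothesis bounds its cardinality by $\prod_{i=1}^{d-1}\bigl(2\sqrt{\rho^2-s^2}/\mu_i+1\bigr)$.

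Summing over slices reduces the theorem to the one-dimensional inequality
\[
\sum_{s}\ \prod_{i=1}^{d-1}\Bigl(\frac{2\sqrt{\rho^2-s^2}}{\mu_i}+1\Bigr)\ \le\ \Bigl(\frac{2\rho}{\mu_d}+1\Bigr)\prod_{i=1}^{d-1}\Bigl(\frac{2\rho}{\mu_i}+1\Bigr),
\]
the sum running over the progression of heights, and this is the step I expect to be the main obstacle. The lazy bound—each summand is at most the central one, so the left side is at most (number of slices)$\times\prod_{i<d}(2\rho/\mu_i+1)$—fails, because $h$ can be much smaller than $\mu_d$, so the number of slices may exceed $2\rho/\mu_d+1$. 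The way out is to use that the profile $s\mapsto\prod_{i<d}(2\sqrt{\rho^2-s^2}/\mu_i+1)$ is even and log-concave, so slices away from the center are genuinely small, together with the coupling $h=\det\La/\det\La_L$: by Minkowski's second theorem applied to $\La_L$ and to $\La$, a small gap $h$ forces $\mu_d$ to be small relative to $\mu_1,\dots,\mu_{d-1}$ (quantitatively, the covering radius of $\La_L$ must be large), which is exactly what keeps the factor $2\rho/\mu_d+1$ large enough to absorb the extra slices. Balancing these two regimes—few large slices versus many small ones—by comparing the progression sum with the corresponding integral is where the dimensional constants must be tracked carefully, and where the special geometry of the ellipsoid, namely the concave radius profile $\sqrt{\rho^2-s^2}$, is essential.
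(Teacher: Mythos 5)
Your reduction to the unit ball, the choice of $L=\lin(w_1,\dotsc,w_{d-1})$, the identity $\mu_i(\La_L)=\mu_i$ for $i\leqslant d-1$, and the per-slice application of the inductive hypothesis are all sound. The gap is the summation inequality you defer to the end, and it is not merely ``the main obstacle'': as stated it is \emph{false}, so no amount of log-concavity or of coupling $h$ to $\mu_d$ via Minkowski's theorems can close this induction. Compare leading terms as $\rho\to\infty$: writing $B^k$ for the unit ball of $\RR^k$, your left-hand side grows like
\begin{equation*}
\frac{1}{h}\int_{-\rho}^{\rho}\prod_{i=1}^{d-1}\frac{2\sqrt{\rho^2-s^2}}{\mu_i}\,ds
=\frac{2^{d-1}\rho^{d}}{h\,\mu_1\dotsm\mu_{d-1}}\cdot\frac{\vol(B^{d})}{\vol(B^{d-1})},
\end{equation*}
while your right-hand side grows like $2^{d}\rho^{d}/(\mu_1\dotsm\mu_d)$; so the inequality forces $h\geqslant\tfrac{1}{2}\mu_d\,\vol(B^{d})/\vol(B^{d-1})$. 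Now take $\La$ to be the Leech lattice ($d=24$, $\det\La=1$, $\mu_1=\dotsb=\mu_{24}=2$) and let $w_1,\dotsc,w_{23}$ be \emph{any} linearly independent minimal vectors, so that $\mu_i(\La_L)=2$ for all $i\leqslant 23$. Since $\La$ is unimodular, the primitive lattice vector $w$ orthogonal to $L$ lies in $\La$ and satisfies $\det(\La\cap L)=\lvert w\rvert\geqslant 2$, hence the hyperplane spacing is $h=\det\La/\det(\La\cap L)=1/\lvert w\rvert\leqslant 1/2$. But the requirement above reads $h\geqslant\vol(B^{24})/\vol(B^{23})=\sqrt{\pi}\,\Gamma(12.5)/12!\approx 0.506>1/2$. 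So for $\rho$ large your left-hand side exceeds your right-hand side (by about one percent), and this happens for \emph{every} admissible choice of the $w_i$, since every nonzero Leech vector has length at least $2$. The loss is intrinsic to the scheme: bounding each slice only through its radius discards all arithmetic interaction between the slices, and exactly that information is needed when the spacing $h$ falls below $\mu_d$ times the ratio $\vol(B^d)/\vol(B^{d-1})\asymp d^{-1/2}$.

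The paper's proof also reduces to balls and also inducts on the dimension by slicing, but the induction is run on a stronger, multi-body statement (Theorem \ref{strong}): finitely many spheres $S_1,\dotsc,S_n$, hypotheses \textbf{(C1)}--\textbf{(C2)} requiring that the difference sets $\mathfrak{D}S_j$ and $S_j-S_k$ avoid suitable dilates of the lattice, and the conclusion that the \emph{total} count $\sum_j\lvert S_j\cap\La\rvert$ is at most $\prod_i q_i$. One slices along lattice hyperplanes (integer heights $m$ in a basis adapted to the minima), groups the slices by residue $m\bmod q_d$, and applies the inductive hypothesis to all slices of a residue class \emph{simultaneously}; the condition that pairwise differences of slices avoid $q_d\La^{d-1}$ replaces any counting of slices, which is why the ``too many thin slices'' problem never arises. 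The price is that condition \textbf{(C2)} must first be restored for slices coming from different spheres, and this is achieved by translating the spheres by lattice vectors so that their differences avoid $t\La$ for all $t\geqslant 1$ (Theorem \ref{STPS}, proved with the parallelogram law). If you want to salvage an orthogonal-slicing argument, you will need an inductive statement which, like the paper's, remembers that the slices are cosets of one common lattice sitting in prescribed parallel hyperplanes, rather than treating each slice as an isolated ball.
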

$\mathfrak{D}T$ denotes the difference set of $T\ssq\RR^d$, that is
\[\mathfrak{D}T:=T-T=\{x-y|x,y\in T\}.\]
We extended the notion of the successive minima to convex bodies that are not necessarily $\bsm{0}$-symmetric, using the
following symmetrization
\[\la_i(K,\La):=\la_i(\tfrac{1}{2}\mathfrak{D}K,\La)=2\la_i(\mathfrak{D}K,\La)\]
and we also extend the definition of the quantities $q_i(K,\La)$ as well, using the same symmetrization. Under this extension,
Minkowski's two theorems on successive minima still hold, due to the Brunn-Minkowski inequality that yields
\[\vol(K)\leqslant \vol(\tfrac{1}{2}\mathfrak{D}K)\]
for all convex bodies $K$. Furthermore, the author proved in \cite{M11} that 
\[\lvert K\cap\La\rvert\leqslant q_1^d\]
 holds for all $K$, whether $\bsm{0}$-symmetric or not, and in general
\[\lvert K\cap\La\rvert\leqslant\frac{4}{e}(\sqrt{3})^{d-1}\prod_{i=1}^{d}q_i.\]

The argument for ellipsoids follows the line of ideas in \cite{M11}. In that paper, the author managed to reduce 
inequality \eqref{BHW}, to the following simultaneous translation problem:
\begin{prob}\label{STP}
 Let $K_1,K_2,\dotsc,K_n$ be $d$-dimensional convex bodies and $\La$ a lattice such that for all indices $i\neq j$
\[(K_i-K_j)\cap\La=\vn.\]
Prove that for each $t\geqslant1$ there are lattice vectors $\bsm{v}_{1,t},\dotsc,\bsm{v}_{n,t}$, such that the translated
convex bodies $K'_i=K_i+\bsm{v}_{i,t}$ satisfy
\[(K'_i-K'_j)\cap t\La=\vn.\]
\end{prob}
It should be noted that a weaker version of this problem implies the desired conjecture. In the case of spheres or homothetic
ellipsoids, we will prove something stronger, namely that we can pick translation vectors $\bsm{v}_i=\bsm{v}_{i,t}$ that
satisfy the second condition for all $t\geqslant1$ (Theorem \ref{STPS}). 

%Next, we will verify Conjecture 3.2 in \cite{M11} for spheres, which in turn implies inequality \eqref{BHW} for spheres,
% and as a consequence, for ellipsoids as well, thus proving Theorem \ref{mainthm}. We note that the family of spheres in all
%dimensions is closed under intersections by affine subspaces.

%Problem \ref{STP} for spheres is tackled as follows: fix $K_1$, and translate the other spheres by vectors of $\La$, so that
%they get as close as possible to $K_1$. This way, $K_1-K_j$ avoids $t\La$, for all $t\geqslant 1$. Now, for $2\leqslant i<j
%\leqslant n$ might not be as close as possible, but still they are close enough, so that $K_i-K_j$ still avoids $t\La$, for all 
%$t\geqslant 1$. For the last part we use the parallelogram law, a property enjoyed only by Euclidean norms.

%Perhaps a similar idea might work for the general case; the major obstruction is that when the $K_i$ are arbitrary, the difference
%bodies $K_i-K_j$ give rise to totally different norms, and we cannot use a nice property, such as the parallelogram law.

%The presentation of the results herein will
%be as self-contained as possible; the reader need not refer to the results of \cite{M10} or \cite{M11} in order to understand
%this paper.

%In the rest of the section we will show that the conjecture restricted
%on ellipsoids reduces to the above problem with the additional condition that the $K_i$ are spheres.

\section{Outline of the proof}

In \cite{M11}, the author proposed the following more general conjecture:

\begin{conj}\label{strconj}
Let $K_1,\dotsc,K_n\subset \mathbb{R}^d$ be convex bodies and $\La$ a lattice. Also, let $\mathbf{e}_1,\dotsc,\mathbf{e}_d$ be a basis of $\La$ and denote by 
$\La^i$ the $\ZZ$-span of $0,\mathbf{e}_1,\dotsc,\mathbf{e}_i$, and let $q_1\geqslant q_2\geqslant\dotsb\geqslant q_d\geqslant q_{d+1}$ be positive 
integers satisfying
\vspace{-.25in}
\begin{description}
	\item[(C1)] $\mathfrak{D}K_j\cap q_i(\Lambda\setminus\Lambda^{i-1})=\varnothing$ for all $1\leqslant j\leqslant n$ and $1\leqslant i\leqslant d$.
	\item[(C2)] $(K_j-K_l)\cap q_{d+1}\Lambda = \varnothing$ for all $1\leqslant j,l\leqslant n$, $j\neq l$. 
\end{description}
\vspace{-.25in}
Then
\[\sum_{j=1}^n G(K_j,\Lambda)\leqslant \prod_{i=1}^d q_i.\]
\end{conj}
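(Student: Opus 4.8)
The natural line of attack is induction on the dimension $d$, peeling off the top of the flag $\La^{d-1}\ssq\La$ one step at a time. Write $c_d$ for the $\mathbf{e}_d$-coordinate, so that the lattice points at a fixed integer height $m$ form the coset $m\mathbf{e}_d+\La^{d-1}$. For each body $K_j$ and each integer $m$ I would consider the slice $K_{j,m}:=(K_j-m\mathbf{e}_d)\cap\lin(\La^{d-1})$, a $(d-1)$-dimensional convex body carrying the lattice $\La^{d-1}$, so that the lattice-point enumerator splits as $G(K_j,\La)=\sum_m G(K_{j,m},\La^{d-1})$. Now partition the integer heights into the $q_d$ residue classes modulo $q_d$: for each $0\leqslant r<q_d$ let $\mathcal{F}_r$ be the family of all slices $K_{j,m}$ with $m\equiv r\ (\mathrm{mod}\ q_d)$, over every $j$ and every such $m$. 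Then $\sum_j G(K_j,\La)=\sum_{r=0}^{q_d-1}\sum_{K_{j,m}\in\mathcal{F}_r}G(K_{j,m},\La^{d-1})$, and the plan is to apply the inductive hypothesis to each $\mathcal{F}_r$ separately, with the flag $\La^1\ssq\dotsb\ssq\La^{d-1}$ and the truncated sequence $q_1\geqslant\dotsb\geqslant q_{d-1}$ (and $q_d$ playing the role of the new bottom modulus). If each $\mathcal{F}_r$ verifies the hypotheses of Conjecture \ref{strconj} in dimension $d-1$, the inner sum is at most $\prod_{i=1}^{d-1}q_i$, and summing over the $q_d$ residues closes the induction to give $\sum_j G(K_j,\La)\leqslant\prod_{i=1}^{d}q_i$.

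For the hypothesis (C1) this reorganisation is benign. Each slice is a single body of the new family, and restricting its difference set to $\lin(\La^{d-1})$ only shrinks it, so $\mathfrak{D}K_{j,m}\cap q_i(\La^{d-1}\setminus\La^{i-1})=\vn$ for $1\leqslant i\leqslant d-1$ is inherited directly from (C1) in dimension $d$. The entire difficulty is condition (C2) for the reorganised family, namely $(K_{j,m}-K_{l,m'})\cap q_d\La^{d-1}=\vn$ for distinct slices inside a common $\mathcal{F}_r$. This separation is needed both across different bodies and, crucially, for two slices of the \emph{same} body at heights $m\equiv m'\ (\mathrm{mod}\ q_d)$. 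Neither follows from the given data: the full difference in $\La$ has $c_d$-component $m-m'$, a nonzero multiple of $q_d$, but because the $q_i$ carry no divisibility relations, condition (C1) with $i=d$ (which forbids only differences lying in $q_d(\La\setminus\La^{d-1})$, i.e.\ with \emph{all} coordinates divisible by $q_d$) does not rule it out, and (C2) is imposed only at the smallest modulus $q_{d+1}$, which disappears under projection.

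To repair this I would not slice rigidly but first translate each $K_j$ by a suitable lattice vector — an operation that changes neither $G(K_j,\La)$ nor any of the hypotheses (C1), (C2) — so as to push the surviving slices into distinct, well-separated positions at the coarser scale $q_d\La^{d-1}$ within each residue class $r$. This is exactly the mechanism isolated in \cite{M11}: the inductive step reduces to a simultaneous translation statement of the type of Problem \ref{STP}, asserting that finitely many bodies separated modulo $\La$ can be simultaneously translated by lattice vectors so as to remain separated modulo $t\La$ for the relevant $t\geqslant1$. Granting such a simultaneous translation, every $\mathcal{F}_r$ satisfies (C2) in dimension $d-1$, the inductive hypothesis applies, and the argument goes through.

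The main obstacle is therefore the simultaneous translation itself in full generality: this is the single point at which the scheme is not unconditional, and it is precisely why Conjecture \ref{strconj} — which contains the Betke--Henk--Wills inequality \eqref{BHW} as the case $n=1$, with the flag taken along the successive-minima directions — is open. A genuine secondary difficulty is arithmetic: because the $q_i$ are arbitrary positive integers, one cannot sidestep the issue by injecting $\bigsqcup_j(K_j\cap\La)$ into the finite group $\La/\bigl(\bigoplus_i q_i\ZZ\mathbf{e}_i\bigr)$, so the layering by residues modulo $q_d$ must be justified by the translation argument rather than by a naive reduction. For the ellipsoids of Theorem \ref{mainthm}, by contrast, the slices are homothetic, and this is exactly what makes the required simultaneous translation available — and in fact available for all $t\geqslant1$ at once (Theorem \ref{STPS}) — which is what lets the induction succeed in that special case while leaving the general conjecture open.
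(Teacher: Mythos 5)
This statement is a conjecture that the paper itself does not prove: its Section 2 merely outlines exactly the strategy you describe --- slice each body along $V^{d-1}$, group integer heights into residue classes modulo $q_d$, observe that (C1) passes to the slices, and repair (C2) by a simultaneous lattice translation in the spirit of Problem \ref{STP} --- and, just as you do, it isolates that translation step as the open obstruction, carrying it out only for spheres via the parallelogram law (Theorem \ref{STPS}), which is what yields Theorem \ref{strong} and hence Theorem \ref{mainthm}. Your proposal therefore coincides with the paper's approach and stops honestly at the same gap; the one imprecision is your aside that translating by arbitrary vectors of $\La$ preserves (C2) --- in general only translations by vectors of $q_{d+1}\La$ do, which is how the paper applies Theorem \ref{STPS} --- but this does not affect the structure of the reduction, since the translation is chosen precisely to replace (C2) by the stronger $q_d$-separation.
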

The main reason for introducing a stronger conjecture is the possibility of using induction on the dimension. A possible ``proof''
would consist of the following steps:
\vspace{-.25in}
\begin{description}
 \item [(1)] Translate the $K_i$ by lattice vectors, so that $q_{d+1}$ is replaced by $q_d$ in \textbf{(C2)}. Notice that neither
\textbf{(C1)} nor the total lattice point enumerator is changed when we apply these translations.
 \item [(2)] For fixed integer $r$, consider all the intersections of $K_i-t\mathbf{e}_d$ by the hyperplane $V^{d-1}:=\La^{d-1}\otimes_{\ZZ}\RR$,
for all $t\equiv r\pmod{q_d}$ and all $1\leqslant i\leqslant n$. Denote those intersections by $K_{i,t}$, and verify that they
satisfy conditions \textbf{(C1)} and \textbf{(C2)} for the lattice $\La^{d-1}$, the basis $\mathbf{e}_1,\dotsc,\mathbf{e}_{d-1}$, and integers 
$q_1\geqslant q_2\geqslant \dotsb \geqslant q_d$.
 \item [(3)] Apply induction and verify that the total lattice point enumerator satisfies the desired inequality.
\end{description}
The main problem is with step \textbf{(1)}, and this is how we prove that Problem \ref{STP} is a reduction of inequality
\eqref{BHW}. What we will show in this paper, is that the above procedure works in the case where all the $K_i$ are spheres or
homothetic ellipsoids, thus proving Theorem \ref{mainthm}. We note that the family of spheres in all
dimensions is closed under intersections by affine subspaces.

Problem \ref{STP} for spheres is tackled as follows: fix $K_1$, and translate the other spheres by vectors of $\La$, so that
they get as close as possible to $K_1$. This way, $K_1-K_j$ avoids $t\La$, for all $t\geqslant 1$, $j\geqslant1$. Now, for 
$2\leqslant i<j\leqslant n$, $K_i$ and $K_j$ might not be as close as possible, but still they are close enough, so that 
$K_i-K_j$ still avoids $t\La$, for all $t\geqslant 1$. For the last part we use the parallelogram law, a property enjoyed only 
by Euclidean norms.

Perhaps a similar idea might work for the general case; the major obstruction is that when the $K_i$ are arbitrary, the difference
bodies $K_i-K_j$ give rise to totally different norms, and we cannot use a nice property, such as the parallelogram law. Heuristically,
we expect that such translations should always exist; what \textbf{(C2)} asserts is that $K_i$ avoids $K_j$, even if it is translated
by $q_{d+1}\La$. Normally, this would also be possible if we replace $q_{d+1}\La$ by the sparser lattice $q_d\La$, perhaps after
we translate the $K_i$ by vectors of $\La$.

The presentation of the results herein will
be as self-contained as possible; the reader need not refer to the results of \cite{M10} or \cite{M11} in order to understand
this paper.

\section{Simultaneous translation of spheres}

Here we answer Problem \ref{STP} to the affirmative, in the case where all the $K_i$ are spheres; by $\mathbf{B}(\bsm{w},r)$
we denote the sphere with radius $r$, centered at $\bsm{w}$.
\begin{thm}\label{STPS}
Let $K_i=\mathbf{B}(\bsm{w}_i,r_i)$, where $1\leqslant i\leqslant n$, $\bsm{w}_i\in\RR^d$, $r_i>0$ for all $i$. Also, let $\La$ a lattice such that
\[(K_i-K_j)\cap\La=\vn\]
for all $1\leqslant i<j\leqslant n$. Then there are $\bsm{u}_i\in \bsm{w}_i+\La$ for all $i$, such that for all $t\geqslant1$ we have
\[(K'_i-K'_j)\cap t\La=\vn\]
for all $1\leqslant i<j\leqslant n$, where $K'_i=\mathbf{B}(\bsm{u}_i,r_i)$.
\end{thm}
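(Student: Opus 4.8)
The plan is to recast everything in terms of Euclidean distances to scaled lattices. Since the difference of two balls is again a ball, $\mathbf{B}(\bsm{u}_i,r_i)-\mathbf{B}(\bsm{u}_j,r_j)=\mathbf{B}(\bsm{u}_i-\bsm{u}_j,\,r_i+r_j)$, writing $\rho_{ij}=r_i+r_j$ the hypothesis $(K_i-K_j)\cap\La=\vn$ says exactly that the distance from $\bsm{w}_i-\bsm{w}_j$ to $\La$ exceeds $\rho_{ij}$, and the desired conclusion says that the distance from $\bsm{u}_i-\bsm{u}_j$ to $t\La$ exceeds $\rho_{ij}$ for every $t\geqslant 1$. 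Because translating a center by a lattice vector leaves its distance to $\La$ unchanged, and only the center differences modulo $\La$ enter, the $t=1$ instance of the conclusion holds automatically for any admissible choice of the $\bsm{u}_i$; the whole task is to upgrade this from $t=1$ to all $t\geqslant 1$ by choosing the translates well.

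The choice I would make is the one indicated by the outline: fix $\bsm{u}_1:=\bsm{w}_1$ and, for each $i$, pick $\bsm{u}_i\in\bsm{w}_i+\La$ so that $\bsm{a}_i:=\bsm{u}_i-\bsm{u}_1$ is a shortest vector in its coset modulo $\La$, i.e. $\bsm{a}_i$ lies in the closed Voronoi cell of $\bsm{0}$ (with $\bsm{a}_1=\bsm{0}$). The defining property $|\bsm{a}_i|\leqslant|\bsm{a}_i-\bsm{v}|$ for all $\bsm{v}\in\La$, expanded through the inner product, is equivalent to $2\langle\bsm{a}_i,\bsm{v}\rangle\leqslant|\bsm{v}|^2$; applying it to both $\bsm{v}$ and $-\bsm{v}$ gives the two-sided bound $|\langle\bsm{a}_i,\bsm{v}\rangle|\leqslant\tfrac12|\bsm{v}|^2$ for every lattice vector. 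This is precisely where the Euclidean (parallelogram-law) structure enters.

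With this in hand the core estimate is a one-variable convexity argument. Fix $i\neq j$, set $\bsm{b}=\bsm{u}_i-\bsm{u}_j=\bsm{a}_i-\bsm{a}_j$ and $\rho=\rho_{ij}$; I must show $|\bsm{b}-t\bsm{v}|>\rho$ for all $t\geqslant 1$ and $\bsm{v}\in\La$. For $\bsm{v}=\bsm{0}$ this is the bare bound $|\bsm{b}|>\rho$ from the $t=1$ case. For $\bsm{v}\neq\bsm{0}$ the map $t\mapsto|\bsm{b}-t\bsm{v}|^2$ is an upward parabola with vertex at $t^\ast=\langle\bsm{b},\bsm{v}\rangle/|\bsm{v}|^2$. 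Adding the two Voronoi bounds for $\bsm{a}_i$ and $\bsm{a}_j$ gives $|\langle\bsm{b},\bsm{v}\rangle|\leqslant|\bsm{v}|^2$, hence $t^\ast\leqslant 1$, so the parabola is nondecreasing on $[1,\infty)$ and its minimum there is $|\bsm{b}-\bsm{v}|^2$, which exceeds $\rho^2$ because $\bsm{v}\in\La$ and the distance from $\bsm{b}$ to $\La$ already exceeds $\rho$. This settles every pair and every $t\geqslant1$.

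I expect the genuine obstacle to be exactly the pairs with $2\leqslant i<j\leqslant n$: there the vector $\bsm{b}=\bsm{a}_i-\bsm{a}_j$ need not itself be Voronoi-reduced, so one cannot argue about a single reduced ball, whereas the pairs involving $K_1$ fall under the trivial case $\bsm{a}_1=\bsm{0}$. What saves the general pair is that the vertex bound $t^\ast\leqslant 1$ is stable under the subtraction $\bsm{a}_i-\bsm{a}_j$, the two one-sided reduction inequalities combining additively; this additivity is a property of the inner product and fails for a general norm, matching the fact that the method is special to Euclidean balls. Homothetic ellipsoids are then covered by a linear change of coordinates that turns the common ellipsoid into a ball and $\La$ into another lattice, reducing them to the spherical case proved here.
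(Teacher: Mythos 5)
Your proposal is correct and is essentially the paper's own proof: you make the same choice of translates (each $\bsm{u}_i$ a shortest representative of its coset relative to $\bsm{u}_1$), and your vertex bound $t^\ast\leqslant 1$, obtained by adding the two Voronoi inequalities, is exactly the hypothesis of the paper's key Lemma, which the paper verifies via the parallelogram law --- the same Euclidean inner-product identity in different packaging. The only cosmetic difference is that you treat the pairs involving $K_1$ as the special case $\bsm{a}_1=\bsm{0}$ of the general argument, whereas the paper handles $i=1$ by a separate direct computation.
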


\begin{proof}
We define
\[d_{ij}:=\inf\{\left\|\bsm{w}_i-\bsm{w}_j+\bsm{\la}\right\||\bsm{\la}\in\La\}\]
for $1\leqslant i<j\leqslant n$, where $\left\|\cdot\right\|$ denotes the Euclidean norm. The above can also be viewed as the Euclidean distance between the sets $\bsm{w}_i+\La$ and $\bsm{w}_j+\La$.

Now we put $\bsm{u}_1=\bsm{w}_1$ and for $2\leqslant i\leqslant n$ we choose $\bsm{u}_i\in \bsm{w}_i+\La$ such that $\left\|\bsm{u}_1-\bsm{u}_i\right\|=d_{1i}$. We define
\[d^t_{ij}:=\inf\{\left\|\bsm{u}_i-\bsm{u}_j+t\bsm{\la}\right\||\bsm{\la}\in\La\}.\]
We will prove that $d^t_{ij}\geqslant d_{ij}$ for all $t\geqslant1$ and $1\leqslant i<j\leqslant n$. For $i=1$ and all $\bsm{\la}\in\La$ we have
\[\left\|\bsm{u}_1-\bsm{u}_j+\bsm{\la}\right\|\geqslant\left\|\bsm{u}_1-\bsm{u}_j\right\|=d_{1j}\]
which is equivalent to
\begin{equation}\label{dis}
2\left\langle \bsm{u}_1-\bsm{u}_j,\bsm{\la}\right\rangle+\left\|\bsm{\la}\right\|^2\geqslant0,
\end{equation}
taking squares on both sides, where $\left\langle \cdot,\cdot\right\rangle$ is the usual Euclidean inner product. Hence,
\begin{eqnarray*}
\left\|\bsm{u}_1-\bsm{u}_j+t\bsm{\la}\right\|^2 &=& \left\|\bsm{u}_1-\bsm{u}_j\right\|^2+2t\left\langle \bsm{u}_1-\bsm{u}_j,\bsm{\la}\right\rangle+t^2\left\|\bsm{\la}\right\|^2\\
&=& (d_{1j})^2+t(2\left\langle \bsm{u}_1-\bsm{u}_j,\bsm{\la}\right\rangle+\left\|\bsm{\la}\right\|^2)+t(t-1)\left\|\bsm{\la}\right\|^2\\
&\geqslant& (d_{1j})^2
\end{eqnarray*}
by \eqref{dis} and $t\geqslant1$. Since this holds for all $\la\in\La$, we get $d^t_{1j}\geqslant d_{1j}$ for all $t$ and $j$.

Next, assume that $2\leqslant i<j\leqslant n$. We will need the following:

\begin{lemma}
Let $\bsm{u}\in\RR^d$ such that
\[\left\|\tfrac{1}{2}\bsm{u}+\bsm{\la}\right\|\geqslant\left\|\tfrac{1}{2}\bsm{u}\right\|\]
for all $\bsm{\la}\in\La$. Then for all $t\geqslant1$ and $\bsm{\la}\in\La$ the following inequality holds
\[\left\|\bsm{u}+\bsm{\la}\right\|\leqslant\left\|\bsm{u}+t\bsm{\la}\right\|.\]
\end{lemma}

\begin{proof}[Proof of Lemma]
By squaring both sides of the first inequality we obtain the following equivalent inequality:
\begin{equation}\label{lm}
\left\langle \bsm{u},\bsm{\la}\right\rangle+\left\|\bsm{\la}\right\|^2\geqslant0.
\end{equation}
Similarly, the second inequality is equivalent to
\[2\left\langle \bsm{u},\bsm{\la}\right\rangle+\left\|\bsm{\la}\right\|^2\leqslant 2t\left\langle \bsm{u},\bsm{\la}\right\rangle+t^2\left\|\bsm{\la}\right\|^2\]
which in turn is equivalent to
\[(t^2-1)\left\|\bsm{\la}\right\|^2+2(t-1)\left\langle \bsm{u},\bsm{\la}\right\rangle\geqslant0,\]
and since $t\geqslant1$, the above is equivalent to
\[(t+1)\left\|\bsm{\la}\right\|^2+2\left\langle \bsm{u},\bsm{\la}\right\rangle\geqslant0,\]
which is true from \eqref{lm} and again from $t\geqslant1$.
\end{proof}

It suffices to prove that $\bsm{u}_i-\bsm{u}_j$ satisfies the conditions of the Lemma. Indeed, by the parallelogram law,
\begin{eqnarray*}
\left\|\tfrac{1}{2}(\bsm{u}_i-\bsm{u}_j+2\bsm{\la})\right\| &=& \tfrac{1}{2}\left\|\bsm{u}_i-\bsm{u}_1+\bsm{\la}\right\|^2+\tfrac{1}{2}\left\|\bsm{u}_1-\bsm{u}_j+\bsm{\la}\right\|^2\\
&\phantom{=}& -\left\|\tfrac{1}{2}(\bsm{u}_i+\bsm{u}_j-2\bsm{u}_1)\right\|^2\\
&\geqslant& \tfrac{1}{2}(d_{1i})^2+\tfrac{1}{2}(d_{1j})^2-\left\|\tfrac{1}{2}(\bsm{u}_i+\bsm{u}_j-2\bsm{u}_1)\right\|^2\\
&=& \tfrac{1}{2}(d_{1i})^2+\tfrac{1}{2}(d_{1j})^2+\left\|\tfrac{1}{2}(\bsm{u}_i-\bsm{u}_j)\right\|^2\\
&\phantom{=}& -\tfrac{1}{2}\left\|\bsm{u}_i-\bsm{u}_1\right\|^2-\tfrac{1}{2}\left\|\bsm{u}_1-\bsm{u}_j\right\|^2\\
&=& \left\|\tfrac{1}{2}(\bsm{u}_i-\bsm{u}_j)\right\|^2.
\end{eqnarray*}
Here, we used the parallelogram law for the parallelogram with sides $\bsm{u}_i-\bsm{u}_1+\bsm{\la}$ and $\bsm{u}_1-\bsm{u}_j+\bsm{\la}$ and the parallelogram with sides $\bsm{u}_i-\bsm{u}_1$ and $\bsm{u}_1-\bsm{u}_j$. The Lemma clearly shows that $d^t_{ij}\geqslant d_{ij}$ for all $t\geqslant1$. By hypothesis, we have $d_{ij}>r_i+r_j$ for all $i\neq j$, therefore $d^t_{ij}>r_i+r_j$ must hold for all $i\neq j$, yielding the fact that
\[(K'_i-K'_j)\cap t\La=\vn\]
for all $t\geqslant1$ and $i\neq j$, as desired.
\end{proof}

\section{Proof of Conjecture \ref{strconj} for spheres}

In \cite{M11}, this conjecture was proven for all convex bodies for $d=1$ and
$d=2$:
\begin{thm}\label{strong}
 Let $S_1,\dotsc,S_n\ssq\RR^d$ be spheres and $\La$ a lattice. Also, let $\mathbf{e}_1,\dotsc,\mathbf{e}_d$ be a basis of $\La$ and denote by 
$\La^i$ the $\ZZ$-span of $0,\mathbf{e}_1,\dotsc,\mathbf{e}_i$, and let $q_1\geqslant q_2\geqslant\dotsb\geqslant q_d\geqslant q_{d+1}$ be positive 
integers satisfying
\vspace{-.25in}
\begin{description}
 \item [(C1)] $\mathfrak{D}S_j\cap q_i(\La\sm\La^{i-1})=\vn$ for all $1\leqslant j\leqslant n$ and $1\leqslant i\leqslant d$.
 \item [(C2)] $(S_j-S_k)\cap q_{d+1}\La=\vn$ for all $1\leqslant j,k\leqslant n$, $j\neq k$.
\end{description}
\vspace{-.25in}
Then
\[\sum_{j=1}^{n}\lvert S_j\cap\La\rvert\leqslant\prod_{i=1}^{d}q_i.\]
\end{thm}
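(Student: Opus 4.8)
The plan is to prove Theorem \ref{strong} by induction on the dimension $d$, following exactly the three-step scheme described in the outline section. The base cases $d=1$ and $d=2$ are already established in \cite{M11} for arbitrary convex bodies, so I may assume $d\geqslant 3$ and that the statement holds in dimension $d-1$. The crux is that the class of spheres is closed under intersection by affine subspaces (a sphere met with a hyperplane is again a sphere, possibly empty or lower-dimensional), so the inductive hypothesis for spheres can legitimately be invoked on the slices.

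First I would execute step (1): apply Theorem \ref{STPS} to replace the centers $\bsm{w}_i$ by lattice-translates $\bsm{u}_i\in\bsm{w}_i+\La$. Condition \textbf{(C2)} with $q_{d+1}$ tells us precisely that $(S_j-S_k)\cap q_{d+1}\La=\vn$, i.e.\ the dilated spheres $\tfrac{1}{q_{d+1}}S_i$ have pairwise-disjoint lattice cosets, so Theorem \ref{STPS} (applied after rescaling by $1/q_{d+1}$) produces translates for which $(S'_j-S'_k)\cap t(q_{d+1}\La)=\vn$ holds for every $t\geqslant 1$; taking $t=q_d/q_{d+1}$ upgrades \textbf{(C2)} from $q_{d+1}\La$ to $q_d\La$. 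Crucially, these are translations by vectors of $\La$, so neither \textbf{(C1)} (which depends only on the difference body $\mathfrak{D}S_j=\mathfrak{D}S'_j$) nor the total enumerator $\sum_j\lvert S_j\cap\La\rvert$ is affected. This is the step where the sphere-specific parallelogram-law argument of the previous section is indispensable.

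Next comes step (2), the slicing. Fix a residue $r$ modulo $q_d$ and set $V^{d-1}=\La^{d-1}\otimes_{\ZZ}\RR$. For each $i$ and each integer $t\equiv r\pmod{q_d}$ I would form the slice $S_{i,t}:=(S'_i-t\mathbf{e}_d)\cap V^{d-1}$, which is again a sphere inside $V^{d-1}$ (with lattice $\La^{d-1}$). The bookkeeping to verify is that, summing over all residues $r$, the slices account for every lattice point: $\sum_j\lvert S'_j\cap\La\rvert=\sum_{r}\sum_{i,t\equiv r}\lvert S_{i,t}\cap\La^{d-1}\rvert$. Then I must check that, for each fixed $r$, the collection $\{S_{i,t}\}$ satisfies \textbf{(C1)} and \textbf{(C2)} relative to $\La^{d-1}$ with the integers $q_1\geqslant\dotsb\geqslant q_d$ (note $q_d$ now plays the role of the former $q_{d+1}$). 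Condition \textbf{(C1)} passes to the slices because $\mathfrak{D}S_{i,t}\ssq(\mathfrak{D}S'_i)\cap V^{d-1}$ and $\La^{i-1}\ssq V^{d-1}$; condition \textbf{(C2)} for distinct slices splits into two cases — different indices $i\neq k$ (controlled by the upgraded \textbf{(C2)} from step (1)) and the same index with $t\neq t'$ but $t\equiv t'\pmod{q_d}$ (controlled by \textbf{(C1)} applied to the direction $\mathbf{e}_d$, since then $t-t'\in q_d\ZZ$ forces the relevant difference into $q_d\La\sm\{0\}$).

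Finally, step (3) applies the inductive hypothesis to each fixed-$r$ family to get $\sum_{i,t\equiv r}\lvert S_{i,t}\cap\La^{d-1}\rvert\leqslant\prod_{i=1}^{d-1}q_i$, and summing over the $q_d$ residue classes $r$ yields the bound $q_d\prod_{i=1}^{d-1}q_i=\prod_{i=1}^{d}q_i$. I expect step (1) to be the genuine obstacle in spirit — it is the reduction to the simultaneous-translation problem that fails for general convex bodies — but since Theorem \ref{STPS} has already been proven, the real labor in \emph{this} proof lies in step (2): carefully confirming that both \textbf{(C1)} and the two-case \textbf{(C2)} survive the passage to slices, and that the index shift sending $q_{d+1}\mapsto q_d$ is exactly compensated by the $q_d$ residue classes. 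The verification that a hyperplane slice of a sphere is a sphere keeps the induction inside the sphere class and is what makes the scheme go through here while remaining open in general.
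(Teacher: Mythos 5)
Your proposal is correct and follows essentially the same route as the paper's own proof: induction on dimension with base case from \cite{M11}, an application of Theorem \ref{STPS} (yours via rescaling by $1/q_{d+1}$, the paper's by applying it directly to the lattice $q_{d+1}\La$ --- trivially equivalent) to upgrade \textbf{(C2)} from $q_{d+1}$ to $q_d$, slicing along hyperplanes parallel to $V^{d-1}$ within each residue class modulo $q_d$, and the same two-case verification of \textbf{(C2)} for slices (distinct indices via the upgraded condition; equal index, distinct heights via \textbf{(C1)} with $i=d$, where the lift of the offending point lies in $q_d(\La\sm\La^{d-1})$, slightly sharper than the $q_d\La\sm\{0\}$ you wrote). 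The only cosmetic difference is the order of operations --- you translate before slicing, the paper sets up the slices first --- which does not affect the argument.
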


\begin{proof}
 We will use induction on $d$; for $d=1$ this is already proven, as mentioned above. Assume that it holds for $d-1$. For each
sphere $S_i$ and each $m\in\RR$ define the ``slice'' of height $m$ by
\[S_j[m]=\{\bsm{x}\in S_j|\bsm{x}-m\mathbf{e}_d\in V^{d-1}\}\]
where $V^i=\La^i\otimes_{\ZZ}\RR$ the vector subspace spanned by $0,\mathbf{e}_1,\dotsc,\mathbf{e}_i$. Also, denote
\[S_{j,m}=S_j[m]-m\mathbf{e}_d\]
the projection of $S_j[m]$ on $V^{d-1}$ along $\mathbf{e}_d$. Hence,
\begin{eqnarray*}
 \sum_{j=1}^{n}\lvert S_j\cap\La\rvert &=& \sum_{m\in\ZZ}\sum_{j=1}^{n}\lvert S_{j,m}\cap\La^{d-1}\rvert\\
&=& \sum_{r=1}^{q_d}\sum_{m\equiv r\bmod{q_d}}\sum_{j=1}^{n}\lvert S_{j,m}\cap\La^{d-1}\rvert.
\end{eqnarray*}
It suffices to prove that for each $r$ we have
\begin{equation}\label{lowerdim}
\sum_{m\equiv r\bmod{q_d}}\sum_{j=1}^{n}\lvert S_{j,m}\cap\La^{d-1}\rvert\leqslant\prod_{i=1}^{d-1}q_i.
\end{equation}
Thus, by induction, we only need to verify conditions (\textbf{C1}) and (\textbf{C2}) for the $(d-1)$-dimensional spheres
$S_{j,m}$, $\mathbf{e}_1,\dotsc,\mathbf{e}_{d-1}$ and the integers $q_1,\dotsc,q_d$, where $m\equiv r\pmod{q_d}$ for some fixed $r$. For each
$j$ and $i$ with $1\leqslant i\leqslant d-1$ we have
\[\mathfrak{D}S_{j,m}\cap q_i(\La^{d-1}\sm\La^{i-1})\ssq\mathfrak{D}S_j\cap q_i(\La\sm\La^{i-1})=\vn\]
by hypothesis, so (\textbf{C1}) is satisfied. (\textbf{C2}) is not necessarily satisfied; however, as we shall see, it is 
satisfied for some appropriate translations of these spheres.

We apply Theorem \ref{STPS} for the spheres $S_1,\dotsc,S_n$ and the lattice $q_{d+1}\La$. So, we can translate these
spheres by elements of $q_{d+1}\La$ so that instead of (\textbf{C2}) they satisfy the stronger condition
\begin{equation}\label{C2strong}
 (S_j-S_k)\cap t\La=\vn
\end{equation}
for all $1\leqslant j,k\leqslant n$, $j\neq k$ and all $t\geqslant q_{d+1}$ (we denote the translated spheres again by $S_j$,
in order to keep the notation compact). In particular, it holds for $t=q_d$. It should be emphasized that under translation
by $q_{d+1}\La$, the total lattice point enumerator
\[\sum_{j=1}^{n}\lvert S_j\cap\La\rvert\]
remains invariant, as well as condition (\textbf{C1}).

With \eqref{C2strong}, we can verify
\begin{equation}\label{C2}
 (S_{j,m}-S_{k,u})\cap q_d\La^{d-1}=\vn
\end{equation}
for $(j,m)\neq(k,u)$. If $j=k$, we have
\begin{eqnarray*}
(S_{j,m}-S_{j,u})\cap q_d\La^{d-1} &=& (S_j[m]-S_j[u])\cap(q_d\La^{d-1}+(m-u)e^d)\\
&\ssq& \mathfrak{D}S_j\cap q_d(\La\sm\La^{d-1})=\vn
\end{eqnarray*}
by (\textbf{C1}) for $S_j$ and $q_d$ and the fact that $m\equiv u\pmod{q_d}$, $m\neq u$.
Assume next that $j\neq k$. then
\begin{eqnarray*}
 (S_{j,m}-S_{k,u})\cap q_d\La^{d-1} &=& (S_j[m]-S_k[u])\cap(q_d\La^{d-1}+(m-u)e^d)\\
&\ssq& (S_j-S_k)\cap q_d\La=\vn
\end{eqnarray*}
by \eqref{C2strong} for $t=q_d$. Therefore, by induction \eqref{lowerdim} holds, thus
\begin{eqnarray*}
 \sum_{j=1}^{n}\lvert S_j\cap\La\rvert &=& \sum_{r=1}^{q_d}\sum_{m\equiv r\bmod{q_d}}\sum_{j=1}^{n}\lvert S_{j,m}\cap\La^{d-1}\rvert\\
&\leqslant& \prod_{i=1}^{d}q_d
\end{eqnarray*}
as desired.
\end{proof}

\section{Proof of Theorem \ref{mainthm}}

We recall that the successive minima of $K$ with respect to a lattice $\La$ are those of $\hDK$. By 
definition of the successive minima $\la_i(K,\La)$, there are $d$ linearly independent lattice vectors $\mathbf{a}^i$, 
$1\leqslant i\leqslant d$ such that
\[\mathbf{a}^i\in \tfrac{\la_i(K,\La)}{2}\DK\cap\La.\]
Then we construct a basis of $\La$, say $\mathbf{e}^1,\dotsc,\mathbf{e}^d$, such that
\[\lin(\mathbf{a}^1,\dotsc,\mathbf{a}^i)=\lin(\mathbf{e}^1,\dotsc,\mathbf{e}^i)\]
for all $i$, $1\leqslant i\leqslant d$. Furthermore, we define the following subgroups of $\La$:
\[\La^i:=\mathbb{Z}\mathbf{e}^1\oplus\dotsb\oplus\mathbb{Z}\mathbf{e}^i\]
It is true that
\begin{equation}\label{mprop}
\interior(\tfrac{\la_i}{2}\DK)\cap\La\ssq\La^{i-1}.
\end{equation}
Indeed, let $j$ be the maximal index such that $\mathbf{a}^j\in\interior(\tfrac{\la_i}{2}\DK)$, or equivalently, $\la_{j+1}=\la_i$.
Then, by the definition of $\la_{j+1}$, there must be exactly $j$ linearly independent vectors in the above intersection. A choice
of such vectors is $\mathbf{a}^1,\dotsc,\mathbf{a}^j$, hence
\begin{equation*}
\interior(\tfrac{\la_i}{2}\DK)\cap\La\ssq\La^{j}\ssq\La^{i-1}.
\end{equation*}
The following Lemma was proven in \cite{M11}; we provide it here for completion.

\begin{lemma}\label{lem1}
Let $K\ssq\RR^d$ be a convex body and $\La$ be a lattice. For each real $n_i$, satisfying $n_i>2/\lambda_i$, we have
\[\DK\cap n_i(\La\sm \La^{i-1})=\vn.\]
In particular,
\[\interior(\DK)\cap \frac{2}{\la_i}(\La\sm\La^{i-1})=\vn.\]
\end{lemma}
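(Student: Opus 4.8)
The plan is to prove the contrapositive, connecting membership in $n_i(\La\sm\La^{i-1})$ directly to the definition of the $i$th successive minimum. Suppose for contradiction that there is a lattice vector $\bsm{v}\in\DK\cap n_i(\La\sm\La^{i-1})$ with $n_i>2/\la_i$. Writing $\bsm{v}=n_i\bsm{w}$ with $\bsm{w}\in\La\sm\La^{i-1}$, the containment $\bsm{v}\in\DK=K-K$ means $n_i\bsm{w}\in\DK$, so $\bsm{w}\in\tfrac{1}{n_i}\DK=\tfrac{1}{2}\cdot\tfrac{2}{n_i}\DK$. The key observation is that $2/n_i<\la_i$, so $\bsm{w}$ lies in the scaled difference body $\tfrac{2/n_i}{2}\DK$ at a scale \emph{strictly below} $\la_i$.

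Next I would exploit the structure established just before the Lemma. The vectors $\mathbf{a}^1,\dotsc,\mathbf{a}^{i-1}$ are $i-1$ linearly independent lattice points lying in $\tfrac{\la_{i-1}}{2}\DK\ssq\tfrac{\la_i}{2}\DK$ (using $\la_{i-1}\leqslant\la_i$), and by construction they span $V^{i-1}=\La^{i-1}\otimes_\ZZ\RR$. Since $\bsm{w}\in\La\sm\La^{i-1}$, the vector $\bsm{w}$ is not in the $\ZZ$-span of $\mathbf{e}^1,\dotsc,\mathbf{e}^{i-1}$; I would argue it is in fact linearly independent from $\mathbf{a}^1,\dotsc,\mathbf{a}^{i-1}$ over $\RR$, because $\lin(\mathbf{a}^1,\dotsc,\mathbf{a}^{i-1})=V^{i-1}$ and any lattice point in $V^{i-1}$ lies in $\La^{i-1}$. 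Thus $\mathbf{a}^1,\dotsc,\mathbf{a}^{i-1},\bsm{w}$ furnish $i$ linearly independent lattice vectors, all contained in $\tfrac{\la'}{2}\DK$ for some $\la'<\la_i$ (taking $\la'=\max\{\la_{i-1},2/n_i\}<\la_i$). This contradicts the definition of $\la_i$ as the infimum of scales at which $i$ linearly independent lattice points appear.

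The step I expect to require the most care is verifying that $\bsm{w}$ is genuinely linearly independent from the $\mathbf{a}^1,\dotsc,\mathbf{a}^{i-1}$ over $\RR$, rather than merely lattice-independent. This is exactly where the hypothesis $\bsm{w}\notin\La^{i-1}$ is used: because $\lin(\mathbf{a}^1,\dotsc,\mathbf{a}^{i-1})=V^{i-1}$ and $V^{i-1}\cap\La=\La^{i-1}$, any $\bsm{w}\in\La$ that were an $\RR$-combination of the $\mathbf{a}^j$ would lie in $V^{i-1}\cap\La=\La^{i-1}$, contradicting $\bsm{w}\in\La\sm\La^{i-1}$. Once this is in place, the contradiction with the minimality of $\la_i$ is immediate.

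Finally, the ``in particular'' clause follows by a limiting argument: a point in $\interior(\DK)\cap\tfrac{2}{\la_i}(\La\sm\La^{i-1})$ would lie in the open body, hence in $\tfrac{1}{n_i}\DK$ for some $n_i$ slightly larger than $2/\la_i$ (shrinking the scaling factor toward $2/\la_i$ keeps an interior point inside), reducing to the first part. I would note that this is essentially the assertion \eqref{mprop} recast in difference-body language, so one could alternatively cite that computation directly.
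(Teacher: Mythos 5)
The reduction in your first paragraph (from $\bsm{v}\in\DK\cap n_i(\La\sm\La^{i-1})$ to $\bsm{w}\in\tfrac{1}{n_i}\DK$ with $\bsm{w}\in\La\sm\La^{i-1}$) and your linear-independence argument (that $\bsm{w}\notin\La^{i-1}$ forces $\bsm{w}\notin V^{i-1}=\lin(\mathbf{a}^1,\dotsc,\mathbf{a}^{i-1})$, since lattice points of $V^{i-1}$ lie in $\La^{i-1}$) are both correct, and they match the paper's strategy. The gap is in your concluding step: you need $\la'=\max\{\la_{i-1},2/n_i\}<\la_i$, and this strict inequality can fail, because successive minima are only \emph{weakly} increasing. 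Ties $\la_{i-1}=\la_i$ are entirely typical (for the unit ball and $\La=\ZZ^d$ all the minima coincide). When $\la_{i-1}=\la_i$, your construction exhibits $i$ linearly independent lattice vectors inside $\tfrac{\la_i}{2}\DK$ --- but that is no contradiction at all: since the infimum defining $\la_i$ is attained on the compact body $\tfrac{1}{2}\DK$, such vectors (namely $\mathbf{a}^1,\dotsc,\mathbf{a}^i$) exist by definition. So, as written, the argument establishes nothing whenever the $i$th minimum is tied with the $(i-1)$st, and this case cannot be dismissed.

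The repair is exactly the device in the paper's proof of \eqref{mprop}, which the paper's proof of the Lemma then simply cites: let $j$ be the largest index with $\la_j<\la_i$ (possibly $j=0$; in the paper's phrasing, $j$ is the maximal index with $\mathbf{a}^j\in\interior(\tfrac{\la_i}{2}\DK)$, so that $\la_{j+1}=\la_i$). Adjoin $\bsm{w}$ only to $\mathbf{a}^1,\dotsc,\mathbf{a}^j$: your independence argument still applies because $V^j\ssq V^{i-1}$, so these are $j+1$ linearly independent lattice vectors, and they all lie in $\tfrac{\la''}{2}\DK$ with $\la''=\max\{\la_j,2/n_i\}$, which now genuinely satisfies $\la''<\la_i=\la_{j+1}$. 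That contradicts the definition of $\la_{j+1}$ and proves the first claim; the case $j=0$ is the same with $\bsm{w}$ alone contradicting the definition of $\la_1$. Your limiting argument for the ``in particular'' clause is fine (and when $K$ is not full-dimensional, $\interior(\DK)=\vn$, so there is nothing to prove there). Your closing remark --- that one could instead cite \eqref{mprop} directly --- is indeed the sound route and is precisely what the paper does; but the self-contained argument you actually give does not survive tied minima, and handling ties is the entire substance of \eqref{mprop}.
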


\begin{proof}
Assume otherwise; then the intersection
\[\tfrac{1}{n_i}\mathfrak{D}K\cap(\Lambda\setminus\Lambda^{i-1})\]
would be nonempty. The left part of this intersection is a subset of
\[\interior(\tfrac{\lambda_i}{2}\mathfrak{D}K),\]
since $n_i>2/\lambda_i$. Therefore, the intersection
\[\interior(\tfrac{\lambda_i}{2}\mathfrak{D}K)\cap(\Lambda\setminus\Lambda^{i-1})\]
is nonempty, contradicting (\ref{mprop}) above, as was to be shown.
\end{proof}

Now consider an ellipsoid $E\ssq\RR^d$ and a lattice $\La$. There is some $A\in\GL_d(\RR)$ such that $AE$ is a sphere; we have
$G(AE,A\La)=G(E,\La)$, as well as $\la_i(AE,A\La)=\la_i(E,\La)$, for all $i$, $1\leqslant i\leqslant d$, therefore $q_i$ 
remain also invariant under the action of $\GL_d(\RR)$. So, without loss of generality, we may assume that $E=S$ is a 
sphere.

Let $\mathbf{e}_1,\dotsc,\mathbf{e}_d$ an appropriate basis of $\La$, defined as above. Now we apply Theorem \ref{strong}
 for $n=1$, $S_1=S$, the lattice $\La$ with basis $\mathbf{e}_1,\dotsc,\mathbf{e}_d$, $q_i=q_i(S,\La)$ for $1\leqslant i
\leqslant d$ and $q_{d+1}=1$. Condition (\textbf{C1}) holds by Lemma \ref{lem1} and condition (\textbf{C2}) holds vacuously
when $n=1$. Thus,
\[\lvert S\cap\La\rvert\leqslant \prod_{i=1}^{d}q_d,\]
completing the proof.

\end{document}